
\documentclass[12pt,a4paper,leqno,verbatim]{amsart}
\newcounter{minutes}\setcounter{minutes}{\time}
\divide\time by 60
\newcounter{hours}\setcounter{hours}{\time}
\multiply\time by 60 \addtocounter{minutes}{-\time}

\usepackage{amssymb}
\usepackage{hyperref}
\usepackage{graphicx}
\usepackage{subfigure}
\date{}
\newfont{\cyrilic}{wncyr10 scaled 1000}

\title[{Generalized complete elliptic integrals}]
{On a function involving generalized complete \\
$(p,q)$- elliptic integrals}

\author[B. A. Bhayo]{Barkat Ali Bhayo}
\address{Department of mathematics, Sukkur IBA University, Sindh, Pakistan}
\address{Faculty of Engineering and natural sciences, Sabanci University, 34956 Tuzla/Istanbul, Turkey}
\email{barkat.bhayo@iba-suk.edu.pk,\quad barkatbhayo@sabanciuniv.edu}

\author[L. Yin]{Li Yin}
\address{Department of Mathematics, Binzhou University, Binzhou City, Shandong Province, 256603, China}
\email{yinli\_79@163.com}

\newcommand{\comment}[1]{}

\swapnumbers
\theoremstyle{plain}

\newtheorem{theorem}[equation]{Theorem}
\newtheorem{lemma}[equation]{Lemma}

\newtheorem{remark}[equation]{Remark}

\newtheorem{proposition}[equation]{Proposition}

\numberwithin{equation}{section}

\pagestyle{headings}
\setcounter{page}{1}
\addtolength{\hoffset}{-1.15cm}
\addtolength{\textwidth}{2.5cm}
\addtolength{\voffset}{0.45cm}
\addtolength{\textheight}{-0.9cm}

\begin{document}

\font\fFt=eusm10 
\font\fFa=eusm7  
\font\fFp=eusm5  
\def\K{\mathchoice
{\hbox{\,\fFt K}}
{\hbox{\,\fFt K}}
{\hbox{\,\fFa K}}
{\hbox{\,\fFp K}}}
\def\E{\mathchoice
{\hbox{\,\fFt E}}
{\hbox{\,\fFt E}}
{\hbox{\,\fFa E}}
{\hbox{\,\fFp E}}}

\allowdisplaybreaks

\begin{abstract}
Motivated by the work of Alzer and Richards \cite{ar}, here authors study the monotonicity and convexity properties of the function
$$\Delta _{p,q} (r) = \frac{{E_{p,q}(r)  - \left( {r'} \right)^p K_{p,q}(r) }}{{r^p }} - \frac{{E'_{p,q}(r)  - r^p K'_{p,q}(r) }}{{\left( {r'} \right)^p }},$$
where $K_{p,q}$ and $E_{p,q}$ denote the complete $(p,q)$- elliptic integrals of the first and second kind, respectively.
\end{abstract}

\vspace{.5cm}

\maketitle
{\bf 2010 Mathematics Subject Classification}: 33C99, 33B99

{\bf Keywords and phrases}: inequalities, $(p,q)-$ elliptic integrals, hypergeometric function, monotonicity, convexity.


\def\thefootnote{}
\footnotetext{ \texttt{\tiny File:~\jobname .tex,
          printed: \number\year-\number\month-\number\day,
          \thehours.\ifnum\theminutes<10{0}\fi\theminutes}
} \makeatletter\def\thefootnote{\@arabic\c@footnote}\makeatother

\section{introduction}

For $0<r<1$ and $r'=\sqrt{1-r^2}$, the Legendre's complete elliptic integrals of the first and the second kind are defined by

$$\left\{\begin{array}{lll} \K=\K(r)=\displaystyle\int_0^{\pi/2}\frac{dt}{\sqrt{1-r^2\sin(t)^2}}=
\displaystyle\int_0^1{\frac{dt}{\sqrt{(1-t^2)(1-r^2t^2)}}},\\

\bigskip

 \E=\E(r)=\displaystyle\int_0^{\pi/2}\sqrt{1-r^2
\sin(t)^2}dt=\displaystyle\int_0^1\sqrt{\frac{1-r^2t^2}{1-t^2}}dt,\\
                      \K(0)=\displaystyle\frac{\pi}{2}=
											\E(0),\, \K(1)=\infty,
											\,\E(1)=0,\\
											\K'=\K'(r)=\K(r')\quad {\rm and }\quad \E'=\E'(r)=\E(r'),
\end{array}\right.$$
respectively. These integrals have played very crucial role in many branches of mathematics, for example, they helps us to find the length of curves and to express the solution of differential equations. The elliptic integrals of the first and the second kind have been extensive interest of the research for several authors, and many results have been established about these integrals in the literature, e.g., 
see \cite{ref2,ref3,ref4,ref5,ref1,ref9,ref11}.
For the monotonicity, convexity properties, asymptotic approximations, functional inequalities of these integrals and their relations with elementary functions, we refer the reader to see e.g., \cite{avvb,ref6,ref7,ref8,ref10,ref11,ref12,ref13,ref14,ref15} and the references therein.

The Gaussian hypergeometric function is defined by
\begin{equation}\label{hypform}
F(a,b;c;z)=~_{2}F_1(a,b;c;z)=\sum_{n\geqslant 0}\frac{(a)_n(b)_n}{(c)_n}\frac{z^n}{n!}, \quad |z|<1.
\end{equation}
Here $(a)_0=1$ for $a\neq 0$, and $(a)_n$ is the Pochhammer symbol $(a)_n=a(a+1)\cdots(a+n-1)$, for $n\in \mathbf{N}$.
The Gaussian hypergeometric function can be represented in the integral form as follows,
\begin{equation}\label{hyp}
F(a,b;c;z)=\frac{\Gamma(c)}{\Gamma(b)(c-b)}\int_0^1{t^{b-1}(1-t)^{c-b-1}(1-zt)^{-a}dt},
\end{equation}
${\rm Re}(c)>{\rm Re}(a)>0, |\arg(1-z)|<\pi$, see \cite{as}.

For $|s|<1/2$ and $0\leq|r|<1$, the complete elliptic integrals of the first and the 
second kind were slightly generalized by Borwein and Borwein \cite{bb} as follows:
\begin{equation}\label{start}{\rm K}_s(r)=F\left(\frac{1}{2}-s,\frac{1}{2}+s;1;r^2\right),\quad
{\rm E}_s(r)=F\left(-\frac{1}{2}-s,\frac{1}{2}+s;1;r^2\right).
\end{equation}
Note that ${\rm K}_0(r)=\K(r)\quad {\rm and}\quad {\rm E}_0(r)=\E(r).$

In order to define the generalized complete $(p,q)$-elliptic integrals of the first and the second kind, we need to define the generalized sine function.

The eigenfunction $\sin_{p,q}$ of the so-called one-dimensional $(p,q)$-Laplacian problem
\cite{dm}
$$-\Delta_p u=-\left(|u'|^{p-2}u'\right)'
=\lambda|u|^{q-2}u,\,u(0)=u(1)=0,\ \ \ p,q>1,$$
is known as the generalized sine function with two parameters $p,q>1$ in the literature 
(see, \cite{bbk,bv,be,egl,t1,t2,t3,t4}), and defined as
the inverse function of
$${\rm arcsin}_{p,q}(x)=\int^x_0(1-t^p)^{-\frac{1}{q}}dt,\quad 0<x<1.$$
Also the generalized $\pi$ is defined as
$$\pi_{p,q}=2{\rm arcsin}_{p,q}(1)=\frac{2}{q}B\left(1-\frac{1}{p},\frac{1}{q}\right),$$
which is the generalized version of the celebrated formula of $\pi$ proved by Salamin \cite{salamin} and Brent \cite{brent} in 1976.
Here $B(.,.)$ denotes the classical beta function.

For all $p,q\in (1,\infty),\,r\in (0,1)$ and $r' = \left( {1 - r^p } \right)^{1/p}$, the generalized complete $(p,q)$-elliptic integrals of the first and the second kind are defined by
\begin{equation}\label{buffer}
\begin{cases}
K_{p,q} (r) = \int_0^{\pi _{p,q} /2} {\left( {1 - r^q \sin_{p,q} ^{q} t} \right)^{1/p-1}} dt,\quad
K'_{p,q}  = K'_{p,q} (r) = K_{p,q} (r'),\\
E_{p,q} (r) = \int_0^{\pi _{p,q} /2} {\left( {1 - r^q \sin_{p,q} ^{q} t} \right)^{1/p} } dt,
\quad E'_{p,q}  = E'_{p,q} (r) = E_{p,q} (r'),
\end{cases}
\end{equation}
respectively.
Applying the integral representation formula \eqref{hyp}, the generalized complete $(p,q)$-elliptic integrals can be expressed in terms of
hypergeometric functions as follows:
\begin{equation}\label{(1.1)}
K_{p,q} (r) = \frac{{\pi _{p,q} }}{2}F\left( {\frac{1}{q},1 - \frac{1}{p};1- \frac{1}{p}+\frac{1}{q};r^p } \right),
\end{equation}
and
\begin{equation}\label{(1.2)}
E_{p,q} (r) = \frac{{\pi _{p,q} }}{2}F\left( {\frac{1}{q}, - \frac{1}{p};1- \frac{1}{p}+\frac{1}{q};r^p } \right),
\end{equation}
see, e.g. \cite{bhayoyin}.
For $p=q$, we write $K_{p,p}=K_p$. Note that $K_2=\K$ and $E_2=\E$. It is worth to mention that
Takeuchi \cite{t2} proved that
$${\rm K}_s(r)=\frac{\pi}{\pi_p}K_p(r^{2/p})\quad {\rm and} \quad
{\rm E}_s(r)=\frac{\pi}{\pi_p}E_p(r^{2/p}),$$
for $|s|<1/2$ and $p=2/(2s+1)$.


In 1998, Anderson, Qiu and Vamanamurthy \cite[Theorem 1.14]{aqv} studied the monotonicity and convexity property of the function
$$f(r)=\frac{\E-r'^2\K}{r^2}\cdot\frac{r'^2}{\E'-r^2\K'}$$
by giving the following theorem.

\begin{theorem}\label{thm1.5} The function $f(r)$ is increasing and convex from $(0,1)$ onto
$(\pi/4,4/\pi)$. In particular,
$$\frac{\pi}{4}<f(r)<\frac{\pi}{4}+\left(\frac{4}{\pi}-\frac{\pi}{4}\right)r$$
for $r\in (0,1)$. These two inequalities are sharp as $r\to 0$, while the second inequality
is also sharp as $r \to 1$.
\end{theorem}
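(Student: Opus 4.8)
The plan is to reduce every claim to the single auxiliary function
$$m(r)=\frac{\E-r'^2\K}{r^2},\qquad r\in(0,1),$$
and to exploit the symmetry it carries. Since the complementary modulus of $r'$ is again $r$, one has $m(r')=(\E'-r^2\K')/r'^2$, so that $f(r)=m(r)/m(r')$; in particular $f(r)f(r')=1$, whence $f(1/\sqrt2)=1$. First I would fix the endpoints. Inserting the Maclaurin series $\K=\frac{\pi}{2}\sum_{n\ge0}a_nr^{2n}$ and $\E=\frac{\pi}{2}\sum_{n\ge0}\frac{a_n}{1-2n}r^{2n}$, with $a_n=[(2n-1)!!/(2n)!!]^2$, into $m$ yields $m(0^+)=\pi/4$, while the limits $\E\to1$ and $r'^2\K\to0$ as $r\to1$ give $m(1^-)=1$. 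Hence $f(0^+)=\pi/4$ and $f(1^-)=4/\pi$.

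For monotonicity I would differentiate by means of the standard formulas $d\E/dr=(\E-\K)/r$ and $d\K/dr=(\E-r'^2\K)/(rr'^2)$. A short computation collapses the derivative of the numerator of $m$ to $\frac{d}{dr}(\E-r'^2\K)=r\K$, whence
$$m'(r)=\frac{(2-r^2)\K-2\E}{r^3}.$$
Expanding $(2-r^2)\K-2\E$ in powers of $r^2$, the coefficient of $r^{2n}$ reduces to $\frac{\pi}{2}\,a_{n-1}\,\frac{n-1}{n}$, which is nonnegative and strictly positive for $n\ge2$; thus $m$ is positive and strictly increasing on $(0,1)$. Since $r'$ decreases as $r$ increases and $m$ is increasing, $m(r')$ decreases, so $f(r)=m(r)/m(r')$ is a product of two positive increasing factors and is therefore strictly increasing.

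Convexity is the crux. Writing $\log f=\log m(r)-\log m(r')$ and setting $G=\log m$, I would differentiate twice to reach
$$\frac{f''}{f}=\left(G'(r)+\frac{r}{r'}G'(r')\right)^2+G''(r)+\frac{1}{r'^3}G'(r')-\frac{r^2}{r'^2}G''(r').$$
Since $f>0$, convexity reduces to the positivity of the right-hand side; the squared term being nonnegative, the task is to dominate the remaining terms. The genuine obstacle is that this expression mixes values of $G'$ and $G''$ at both $r$ and $r'$, and the term $-\frac{r^2}{r'^2}G''(r')$ has the wrong sign, so neither monotonicity nor convexity of $G=\log m$ by itself is enough. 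I expect to settle the sign by retaining the nonnegative squared term and reducing the remainder, through the differentiation formulas above, to the positivity of an explicit combination of $\K$ and $\E$ evaluated at $r$ and $r'$, which I would then verify either by the monotone form of l'H\^opital's rule applied to suitable quotients of $\K$ and $\E$, or by a term-by-term estimate of the hypergeometric series as in the monotonicity step.

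The stated inequalities then follow immediately. Strict monotonicity together with $f(0^+)=\pi/4$ gives the lower bound $f(r)>\pi/4$, while convexity forces the graph of $f$ below its chord on $[0,1]$, which is precisely $\pi/4+(4/\pi-\pi/4)r$ since the chord joins the endpoint values $\pi/4$ and $4/\pi$. Sharpness as $r\to0$ and as $r\to1$ is read off from the computed limits.
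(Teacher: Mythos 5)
Note first that the paper you were given contains no proof of this statement at all: it is quoted verbatim from Anderson--Qiu--Vamanamurthy \cite{aqv} as motivation for the additive analogue $\Delta(r)$, so your proposal has to stand entirely on its own. Its first two thirds do stand. The reduction $f(r)=m(r)/m(r')$ with $m(r)=(\E-r'^2\K)/r^2$, the endpoint values $m(0^+)=\pi/4$ and $m(1^-)=1$, the identity $\frac{d}{dr}\bigl(\E-r'^2\K\bigr)=r\K$, the resulting formula $m'(r)=\bigl((2-r^2)\K-2\E\bigr)/r^3$, and the coefficient computation $\frac{\pi}{2}a_{n-1}\frac{n-1}{n}\ge 0$ are all correct; they legitimately give strict monotonicity of $f$, the limits $\pi/4$ and $4/\pi$, and hence the lower bound $f(r)>\pi/4$ with its sharpness.

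The convexity step, however, is a plan rather than a proof, and it is exactly the step on which the substantive (upper, chord) inequality rests. You derive the correct formula
$$\frac{f''}{f}=\Bigl(G'(r)+\tfrac{r}{r'}G'(r')\Bigr)^2+G''(r)+\tfrac{1}{r'^3}G'(r')-\tfrac{r^2}{r'^2}G''(r'),$$
observe that the last term has the wrong sign, and then state that you \emph{expect} to settle the sign by reducing the remainder to an explicit positivity statement, to be checked later by a monotone l'H\^opital argument or by series estimates. That deferred verification is the entire content of the convexity claim, and it is not routine. Concretely: since $G'(s)=\tfrac{s}{4}+\tfrac{5}{32}s^3+O(s^5)$ near $s=0$, the two unmatched terms $\tfrac{1}{r'^3}G'(r')$ and $\tfrac{r^2}{r'^2}G''(r')$ each diverge like $\tfrac{1}{4r'^2}$ as $r\to 1^-$ and cancel at leading order, their difference tending to the \emph{negative} constant $-\tfrac{1}{16}$; so the sign of $f''$ is decided only after tracking these cancellations against the remaining terms, and in the interior of $(0,1)$, where everything is of order one, neither of your two suggested tools is shown to apply --- in particular, term-by-term series comparison breaks down precisely because the expression mixes the arguments $r$ and $r'$ in a single sum. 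Until this positivity is actually established, the convexity of $f$, and with it the inequality $f(r)<\frac{\pi}{4}+\bigl(\frac{4}{\pi}-\frac{\pi}{4}\bigr)r$, remains unproven. (For what it is worth, the technique this paper uses for the additive analogue --- the reduction of Lemma \ref{lem2.1} to a single hypergeometric function together with contiguous relations as in Lemma \ref{lem2.3} --- is the kind of closed-form manipulation that would be needed here too; the symmetry $f(r)f(r')=1$ alone cannot produce the sign.)
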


Theorem \ref{thm1.5} has been extended to the case of the functions  \eqref{start} in \cite[Theorem 3.1]{HTZ} and to the case of 
functions $K_{p,p},\, E_{p,p}$ \eqref{buffer} in \cite[Theorem 3.1]{ref1}.

Recently, Alzer and Richards \cite{ar} studied the properties of the additive counterpart
$$\Delta (r) = \frac{{\E - \left( {1 - r^2 } \right)\K}}{{r^2 }} - \frac{{\E' - r^2 \K'}}
{{\left( {1 - r^2 } \right)}}$$
of the above result, and proved the following theorem.

\begin{proposition}\label{alri}
The function $\Delta (r)$ is strictly increasing and strictly convex from $(0,1)$ onto $(\pi/4-1,1-\pi/4)$. Moreover, for all $r\in(0,1)$, one has
\begin{equation}\label{(1.3)}
\frac{\pi }{4} - 1 + \alpha r < \Delta (r) < \frac{\pi }{4} - 1 + \beta r,
\end{equation}
with best possible constants $\alpha=0$ and $\beta=2-\frac{\pi}{2}=0.42920\ldots.$
\end{proposition}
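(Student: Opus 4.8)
The plan is to exploit the reflection $r\mapsto r'=\sqrt{1-r^2}$, under which $\Delta$ turns out to be odd, and to base everything on the single auxiliary function $g(r)=\dfrac{\E-(1-r^2)\K}{r^2}$.

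\emph{Symmetry and endpoint values.} Since $(r')'=r$, the second quotient in the definition of $\Delta$ is exactly $g(r')=\dfrac{\E'-r^2\K'}{1-r^2}$, so $\Delta(r)=g(r)-g(r')$; in particular $\Delta(r')=-\Delta(r)$ and $\Delta(1/\sqrt2)=0$. From the Maclaurin expansions $\K=\frac{\pi}{2}\left(1+\frac14 r^2+\cdots\right)$ and $\E=\frac{\pi}{2}\left(1-\frac14 r^2-\cdots\right)$ I get $\E-(1-r^2)\K=\frac{\pi}{4}r^2+O(r^4)$, hence $g(0^+)=\pi/4$; combining this with $\E'\to1$ and the logarithmic growth $\K'\sim\log(4/r)$ as $r\to0$ (so that $r^2\K'\to0$) gives $g(r')\to1$, whence $\Delta(0^+)=\pi/4-1$. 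The oddness then forces $\Delta(1^-)=1-\pi/4$.

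\emph{Monotonicity.} Using the standard formulas $\frac{d\K}{dr}=\frac{\E-(1-r^2)\K}{r(1-r^2)}$ and $\frac{d\E}{dr}=\frac{\E-\K}{r}$, a short calculation collapses to the clean expression $g'(r)=\dfrac{(2-r^2)\K-2\E}{r^3}$. To see $g'>0$ I would rewrite $(2-r^2)\K-2\E=-r^2\int_0^{\pi/2}\cos2t\,(1-r^2\sin^2 t)^{-1/2}\,dt$ and observe that the weight $(1-r^2\sin^2 t)^{-1/2}$ is increasing in $t$ while $\int_0^{\pi/2}\cos2t\,dt=0$; a Chebyshev-type comparison then makes the integral negative, so $g'>0$. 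Thus $g$ is strictly increasing, and since $r$ increases while $r'$ decreases, $\Delta(r)=g(r)-g(r')$ is strictly increasing with range $(\pi/4-1,\,1-\pi/4)$.

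\emph{Convexity --- the crux.} Differentiating $\Delta'(r)=g'(r)+\frac{r}{r'}g'(r')$ once more yields $\Delta''(r)=g''(r)-\frac{r^2}{(r')^2}g''(r')+\frac{g'(r')}{(r')^3}$, where a further reduction gives $g''(r)=\dfrac{(6-5r^2)\E-2(1-r^2)(3-r^2)\K}{r^4(1-r^2)}$. The difficulty is that the middle term is negative and the factor $r^2/(r')^2$ crosses $1$ at $r=1/\sqrt2$, so positivity cannot be read off term by term; moreover the surviving pieces involve $\K,\E$ at $r$ and $\K',\E'$ at $r'$ simultaneously. My route is to first prove $g''>0$ (equivalently $(6-5r^2)\E>2(1-r^2)(3-r^2)\K$, whose Maclaurin series $\frac{\pi}{2}\left(\frac18 r^4+\frac{5}{32}r^6+\cdots\right)$ has nonnegative coefficients), then clear denominators to reduce $\Delta''>0$ to the single inequality
\[
(1-r^2)^2\left[(6-5r^2)\E-2(1-r^2)(3-r^2)\K\right]+r^4\left[(2r^4+5r^2+1)\K'-(3+5r^2)\E'\right]>0,
\]
and finally establish this by an analytic positivity argument (coefficient positivity, or the monotone form of l'Hôpital's rule applied to the hypergeometric series). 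I expect this last verification, complicated by the mixed arguments and the logarithmic singularity of $\K'$ at $0$, to be the main obstacle.

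\emph{The inequalities.} Strict monotonicity gives $\Delta(r)>\Delta(0^+)=\pi/4-1$, i.e.\ the lower bound with $\alpha=0$; since $g'(0^+)=0$ and $\frac{r}{r'}g'(r')\to0$ force $\Delta'(0^+)=0$, no positive $\alpha$ survives near $0$, so $\alpha=0$ is best possible. Strict convexity places $\Delta$ strictly below the chord joining $(0,\pi/4-1)$ and $(1,1-\pi/4)$, whose slope is $2-\pi/2$; this is precisely the upper bound with $\beta=2-\pi/2$, sharp as $r\to0$ and as $r\to1$.
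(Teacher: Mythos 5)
Your symmetry set-up, endpoint limits, and monotonicity argument are correct: the identity $\frac{d}{dr}\left[\E-(1-r^2)\K\right]=r\K$ does yield $g'(r)=\left((2-r^2)\K-2\E\right)/r^3$, the Chebyshev-type comparison with the increasing weight $(1-r^2\sin^2t)^{-1/2}$ does prove $g'>0$, and your algebraic reduction of $\Delta''>0$ to the displayed mixed-argument inequality checks out. But the proof stops exactly where the proposition becomes hard. You never prove that inequality; you only name candidate techniques (``coefficient positivity, or the monotone form of l'H\^opital's rule'') and concede it is ``the main obstacle.'' Coefficient positivity cannot even be launched in the form you state it: $\K'$ and $\E'$ are evaluated at $r'=\sqrt{1-r^2}$ and carry a logarithmic singularity at $r=0$, so the left-hand side has no Maclaurin expansion in $r$, and no term-by-term sign argument applies until some identity puts all terms over a common argument. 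Since both the convexity claim and the upper bound $\beta=2-\pi/2$ (which you obtain from the chord property of convex functions) rest on this unproved step, the proposal does not establish the proposition; it establishes strict monotonicity, the range, and the sharp lower bound $\alpha=0$ only.

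The missing idea is the one the paper (following Alzer and Richards) is organized around: your $g$ is itself a hypergeometric function, $g(r)=H_{1/2,1/2}(r)=\frac{\pi}{4}F\left(\frac12,\frac12;2;r^2\right)$, which is the case $a=b=\frac12$ of Lemma \ref{lem2.1}. Differentiating that representation and then applying the identity $(1-z)F(a+1,b+1;a+b+1;z)=F(a,b;a+b+1;z)$ with $z=1-r^2$, together with the contiguous relation of Lemma \ref{lem2.3}, converts all the terms with argument $(r')^2$ into hypergeometric series of the \emph{same} argument whose coefficients can be compared term by term; that is precisely how the paper proves $\Delta''_{p,q}>0$, and the proposition is the specialization $p=q=2$ of Theorem \ref{thm1.1} (conditions (1) and (2) hold there). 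Your elementary route through the derivative formulas for $\K$ and $\E$ is a genuinely different and pleasant way to get the monotonicity half, but to finish you must either import this hypergeometric machinery or give an honest proof of your mixed-argument inequality, e.g.\ by first eliminating one of the two arguments via such contiguous/quadratic identities.
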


It is natural to extend the result of Alzer and Richards in terms of generalized complete $(p,q)-$ elliptic integrals of the first and second kind. We generalize their function $\Delta$ by
$$\Delta _{p,q} (r) = \frac{{E_{p,q}  - \left( {r'} \right)^p K_{p,q} }}{{r^p }} - \frac{{E'_{p,q}  - r^p K'_{p,q} }}{{\left( {r'} \right)^p }},
$$
and state the following theorem.

\begin{theorem}\label{thm1.1}
For $p,q>1$, the function $
\Delta_{p,q}$ is strictly increasing and strictly convex from $(0,1)$ onto $
\left( {\frac{{\left( {1 - \frac{1}{p}} \right)\pi _{p,q} }}{{2\left( {1 + \frac{1}{q} - \frac{1}{p}} \right)}} - 1,1 - \frac{{\left( {1 - \frac{1}{p}} \right)\pi _{p,q} }}{{2\left( {1 + \frac{1}{q} - \frac{1}{p}} \right)}}} \right)$,
if the following conditions hold:
\begin{enumerate}
\item $2 + \frac{1}{p} + \frac{1}{{p^2 }} \le \frac{5}{p} + \frac{1}{q} < 3 + \frac{1}{{p^2 }},$ \\
\item $\varepsilon(p,q)>0,$
where $$\varepsilon(p,q)=
20 - \frac{{42}}{p} + \frac{6}{q} + \frac{{21}}{{p^2 }} - \frac{2}{{q^2 }} - \frac{{20}}{{pq}} + \frac{9}{{p^2 q}} - \frac{3}{{p^3 }} - \frac{1}{{p^3 q}}.$$
\end{enumerate}

Moreover, for all $r\in(0,1)$, we have
\begin{equation}\label{(1.4)}
{\frac{{\left( {1 - \frac{1}{p}} \right)\pi _{p,q} }}{{2\left( {1 + \frac{1}{q} - \frac{1}{p}} \right)}} - 1}
+ \alpha_1r < \Delta_{p,q} (r) <
{\frac{{\left( {1 - \frac{1}{p}} \right)\pi _{p,q} }}{{2\left( {1 + \frac{1}{q} - \frac{1}{p}} \right)}} - 1}
+ \beta_1 r,
\end{equation}
with best possible constants $\alpha_1=0$ and $\beta_1=
{2 - \frac{{\left( {1 - \frac{1}{p}} \right)\pi _{p,q} }}{{\left( {1 + \frac{1}{q} - \frac{1}{p}} \right)}}}.$
\end{theorem}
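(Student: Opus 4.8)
The plan is to push everything through the reflection $r\mapsto r'$ together with the hypergeometric series \eqref{(1.1)}--\eqref{(1.2)}, so that $\Delta_{p,q}$ becomes governed by a single power series with positive coefficients. Put $a=\frac1q$, $b=1-\frac1p$ and $c=1-\frac1p+\frac1q$; the decisive structural fact is the identity $c=a+b$. With $x=r^p$ one has $(r')^p=1-x$, and a direct check of \eqref{(1.1)}--\eqref{(1.2)} shows that the two terms of $\Delta_{p,q}$ are
$$G(x)=\frac{E_{p,q}-(1-x)K_{p,q}}{x}\qquad\text{and}\qquad G(1-x),$$
so that $\Delta_{p,q}(r)=\Phi(x)$ with $\Phi(x)=G(x)-G(1-x)$; in particular $\Phi(1-x)=-\Phi(x)$. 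Using $c=a+b$, a short telescoping of Pochhammer symbols in $E_{p,q}-(1-x)K_{p,q}$ collapses it to one series and gives
$$G(x)=\frac{\pi_{p,q}\,b}{2}\sum_{k\ge0}\frac{(a,k)(b,k)}{(c,k)(c+k)\,k!}\,x^{k}=:\sum_{k\ge0}g_k x^{k},$$
where every $g_k>0$ since $a,b,c>0$ for $p,q>1$. This one series is all the input I need.

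Monotonicity is then free of any hypothesis. From $\Delta_{p,q}(r)=\Phi(r^p)$ the chain rule gives $\Delta_{p,q}'(r)=p\,r^{p-1}\Phi'(x)$, and
$$\Phi'(x)=G'(x)+G'(1-x)=\sum_{k\ge1}k\,g_k\bigl(x^{k-1}+(1-x)^{k-1}\bigr)>0\qquad(0<x<1),$$
so $\Delta_{p,q}$ is strictly increasing. Moreover $\Phi'(x)$ grows only like $-\log x$ as $x\to0^{+}$ (the logarithmic singularity of $G'$ at $1$), while $r^{p-1}\to0$; since $p>1$ this forces $\Delta_{p,q}'(0^{+})=0$, which is what will make $\alpha_1=0$ sharp.

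Convexity is the crux. Differentiating once more,
$$\Delta_{p,q}''(r)=p\,r^{p-2}\bigl(p\,x\,\Phi''(x)+(p-1)\Phi'(x)\bigr),$$
and since $(p-1)/p=b$ the bracket equals $p\,x^{1-b}\frac{d}{dx}\bigl(x^{b}\Phi'(x)\bigr)$. Thus strict convexity is equivalent to $W(x):=b\Phi'(x)+x\Phi''(x)>0$ on $(0,1)$, i.e. to $x^{1-1/p}\Phi'(x)$ being strictly increasing. The clean way to read the sign of $W$ is to expand about the centre $x=\frac12$, where $\Phi'$ and $\Phi''$ are analytic, $\Phi'$ even and $\Phi''$ odd in $t=x-\frac12$; because $G$ has positive Taylor coefficients, the coefficients of $\Phi'$ at $\frac12$ are positive, and $W$ then splits into an even part and an odd part in $t$, both with positive coefficients. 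Hence for $x\ge\frac12$ one gets $W>0$ at no cost, and the entire difficulty is confined to $x\in(0,\frac12)$, where the odd part is \emph{negative} and must be dominated by the even part. Carrying out this domination is the \emph{main obstacle}: organizing $W$ so that its positivity is controlled by finitely many leading coefficients, condition (1) keeps the relevant Pochhammer factors of one sign, and the inequality $\varepsilon(p,q)>0$ of condition (2) is exactly the positivity of the first coefficient that is not automatically nonnegative, the remaining ones then being dominated.

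It remains to identify the range and the bounds. Since $\Delta_{p,q}(r)=G(x)-G(1-x)$ with $x=r^p$, letting $r\to0$ gives $G(0)=g_0=\frac{b\,\pi_{p,q}}{2c}$ for the first term, while the second tends to $G(1)=E_{p,q}(1)$; Gauss's summation applied to \eqref{(1.2)} (whose parameters satisfy $c-\frac1q+\frac1p=1$) gives $E_{p,q}(1)=1$, the factor $(1-y)K_{p,q}$ vanishing as $y\to1$ on account of the logarithmic blow-up of $K_{p,q}$. Hence $\Delta_{p,q}(0^{+})=\frac{b\,\pi_{p,q}}{2c}-1$, and $\Phi(1-x)=-\Phi(x)$ gives $\Delta_{p,q}(1^{-})=1-\frac{b\,\pi_{p,q}}{2c}$; monotonicity then shows $\Delta_{p,q}$ maps $(0,1)$ onto the stated interval. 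Finally \eqref{(1.4)} follows from convexity in the standard way: the graph lies below the chord joining the two endpoints, of slope $\beta_1=2-\frac{b\,\pi_{p,q}}{c}$, giving the upper estimate, and above the horizontal line through the left endpoint by monotonicity, giving the lower estimate with $\alpha_1=0$; both constants are sharp by the endpoint values and by $\Delta_{p,q}'(0^{+})=0$.
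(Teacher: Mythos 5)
Your setup is correct and, up to notation, reproduces the paper's own machinery: with $x=r^p$ your $G(x)$ is exactly the function $H_{1/q,1/p}(r)$ of Lemma \ref{lem2.1}, your collapsed series $\frac{b\,\pi_{p,q}}{2}\sum_{k\ge0}\frac{(a,k)(b,k)}{(c,k)(c+k)\,k!}x^k=\frac{b\,\pi_{p,q}}{2c}F(a,b;c+1;x)$ agrees with \eqref{(2.2)}, and your endpoint evaluations are Lemma \ref{lem2.4}. Your observation that strict monotonicity needs neither hypothesis (1) nor (2) is correct and slightly cleaner than the paper's phrasing (the paper deduces $\Delta'_{p,q}>0$ from convexity and $\Delta'_{p,q}(0^+)=0$, although its explicit formula for $\Delta'_{p,q}$ is already manifestly positive). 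Likewise your computation of $\Delta'_{p,q}(0^+)=0$, and the deduction of the range and of \eqref{(1.4)} from convexity, monotonicity and the endpoint values, are sound and parallel the paper's argument with the slope function $M_{p,q}$.

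The genuine gap is the proof of convexity, which is the entire content of the theorem and the only place where hypotheses (1) and (2) enter. You correctly reduce convexity to $W(x)=b\,\Phi'(x)+x\,\Phi''(x)>0$ and correctly note that the parity decomposition at $x=\tfrac12$ gives $W>0$ for $x\ge\tfrac12$ for free, so that everything hinges on dominating the negative odd part on $(0,\tfrac12)$. But at exactly this point you stop: the sentence claiming that condition (1) ``keeps the relevant Pochhammer factors of one sign'' and that $\varepsilon(p,q)>0$ ``is exactly the positivity of the first coefficient that is not automatically nonnegative'' is a description of a hoped-for proof, not a proof. You never exhibit the expansion of $W$ in your scheme, never identify which coefficients can be negative, never show condition (1) controls their signs, and never derive $\varepsilon(p,q)$ from it. Indeed it is doubtful that an expansion centred at $x=\tfrac12$ produces $\varepsilon(p,q)$ at all: in the paper, $\varepsilon(p,q)$ emerges from an expansion in powers of $(r')^p$ (i.e.\ about $x=1$), obtained after applying the identity $(1-z)F(a+1,b+1;a+b+1;z)=F(a,b;a+b+1;z)$ and the contiguous relation of Lemma \ref{lem2.3}, then discarding the series terms of index $n\ge2$ (these are positive precisely because of the second inequality in (1)), substituting $(r')^p=1-r^p$ in the $n=0,1$ terms, and using the first inequality in (1) to make the resulting $r^p$-coefficient nonnegative; $\varepsilon(p,q)>0$ is then the positivity of the remaining constant term. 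Your proposal would need an analogous concrete mechanism --- a specific series, a specific truncation, and explicit sign estimates tied to (1) and (2) --- and until that is supplied, the core claim of the theorem remains unproved.
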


\begin{theorem}\label{thm1.2}
For all $r,s\in(0,1)$ and $p,q>1$ satisfying the conditions (1) and (2) given in the above theorem, then we have 
\begin{equation}\label{(1.5)}
{\frac{{\left( {1 - \frac{1}{p}} \right)\pi _{p,q} }}{{2\left( {1 + \frac{1}{q} - \frac{1}{p}} \right)}} - 1}
 < \Delta _{p,q} (rs) - \Delta _{p,q} (r) - \Delta _{p,q} (s) <
{1 - \frac{{\left( {1 - \frac{1}{p}} \right)\pi _{p,q} }}{{2\left( {1 + \frac{1}{q} - \frac{1}{p}} \right)}}}.
\end{equation}
\end{theorem}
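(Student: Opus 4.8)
The plan is to deduce this directly from the monotonicity and convexity of $\Delta_{p,q}$ established in Theorem \ref{thm1.1}, so that the bulk of the analytic work is already in hand. Write $g=\Delta_{p,q}$ and set $c=\frac{(1-1/p)\pi_{p,q}}{2(1+1/q-1/p)}$, so that by Theorem \ref{thm1.1} the map $g$ is a strictly increasing, strictly convex bijection of $(0,1)$ onto $(c-1,\,1-c)$; in particular $g(0^+)=c-1$ and $g(1^-)=1-c=-g(0^+)$. The structural fact I would exploit is that this range is symmetric about the origin, which is precisely why the two bounds in \eqref{(1.5)} are negatives of one another.

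Fix an arbitrary $r\in(0,1)$ and set $\phi(s)=g(rs)-g(r)-g(s)$ for $s\in(0,1)$; since $r$ is arbitrary, bounding $\phi(s)$ uniformly in $s$ establishes \eqref{(1.5)} for all admissible pairs. As $g$ is smooth on $(0,1)$ (being a combination of hypergeometric functions), I would differentiate to obtain $\phi'(s)=r\,g'(rs)-g'(s)$. Here monotonicity of $g$ gives $g'\ge 0$ and strict convexity gives that $g'$ is strictly increasing; combined with $rs<s$ and $0<r<1$ this yields $r\,g'(rs)\le g'(rs)<g'(s)$, hence $\phi'(s)<0$. Thus $\phi$ is strictly decreasing on $(0,1)$.

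It then remains only to evaluate the one-sided limits. Strict monotonicity of $\phi$ gives $\lim_{s\to 1^-}\phi(s)<\phi(s)<\lim_{s\to 0^+}\phi(s)$. Letting the argument of $g$ tend to the endpoints and using the boundary values recorded above, the left endpoint is $\lim_{s\to 1^-}\phi(s)=g(r)-g(r)-g(1^-)=-(1-c)=c-1$, which is exactly the lower bound in \eqref{(1.5)}. For the upper bound, $\lim_{s\to 0^+}\phi(s)=g(0^+)-g(r)-g(0^+)=-g(r)$, and since $g(r)>g(0^+)=c-1$ we get $-g(r)<1-c$, so that $\phi(s)<-g(r)<1-c$. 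Assembling these inequalities gives $c-1<\phi(s)<1-c$, as claimed.

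I do not expect a genuine obstacle here, since the difficult monotonicity and convexity input is quantified entirely within Theorem \ref{thm1.1}, and conditions (1)--(2) are invoked only there. The one point requiring care is the direction of the inequalities together with the boundary evaluations: one must verify that the \emph{decreasing} function $\phi$ approaches the \emph{lower} endpoint $c-1$ as $s\to 1^-$ while staying below $1-c$ as $s\to 0^+$, which relies crucially on the symmetry $g(1^-)=-g(0^+)$. A cosmetic alternative would bypass derivatives by invoking convexity directly, but the differentiation route keeps the sign bookkeeping cleanest.
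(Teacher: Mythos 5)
Your proposal is correct and takes essentially the same route as the paper: fix one variable, show that $\Delta_{p,q}(rs)-\Delta_{p,q}(r)-\Delta_{p,q}(s)$ is strictly decreasing in the other variable using the monotonicity and convexity supplied by Theorem \ref{thm1.1}, and then conclude from the endpoint values $\Delta_{p,q}(0^+)$ and $\Delta_{p,q}(1^-)$, which are negatives of one another. The only difference is cosmetic: the paper obtains the sign of the relevant partial derivative by noting that the mixed partial $\Delta'_{p,q}(rs)+rs\,\Delta''_{p,q}(rs)$ is positive and that the first partial vanishes at $s=1$, whereas you bound $\phi'(s)=r\,\Delta'_{p,q}(rs)-\Delta'_{p,q}(s)<0$ directly from the positivity and strict increase of $\Delta'_{p,q}$ --- the same ingredients, applied slightly more directly.
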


\section{Lemmas}
In this section we give few lemmas which will be used in the proof of the theorems. Moreover, we will use same method for proving our theorems as it is applied in \cite{ar}.

\begin{lemma}\label{lem2.1} Write
$$
H_{a,b} (r) = \frac{\pi _{1/b,1/a} }{2r^{1/b} }\left[ F\left( {a, - b;1+a-b;r^{1/b} } \right) -
\left( 1-r^{1/b} \right) F\left( {a,1 - b;1+a-b;r^{1/b} } \right) \right].$$

For $a,b,r\in(0,1)$, we have
\begin{equation}\label{(2.2)}
H_{a,b} (r) = \frac{(1 - b)\pi _{1/b,1/a} }{2(1+a-b)}F\left( a,1 - b;2+a-b;r^{1/b}  \right).
\end{equation}
\end{lemma}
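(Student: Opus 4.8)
The plan is to cancel the common nonzero factor $\pi_{1/b,1/a}/2$ from both sides of \eqref{(2.2)} and set $x=r^{1/b}$, which turns the assertion into the pure hypergeometric identity
$$F(a,-b;c;x)-(1-x)F(a,1-b;c;x)=\frac{(1-b)\,x}{c}\,F(a,1-b;c+1;x),\qquad c=1+a-b,$$
for $0<x<1$ (note $c+1=2+a-b$). I would establish this by expanding every term through \eqref{hypform} and comparing the coefficient of $x^n$ for each $n\ge 0$.

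Writing the left-hand side as $F(a,-b;c;x)-F(a,1-b;c;x)+x\,F(a,1-b;c;x)$, the $n=0$ coefficient is $1-1+0=0$, which matches the right-hand side since the latter is $O(x)$. For $n\ge 1$ the coefficient of $x^n$ on the left is
$$\frac{(a,n)}{(c,n)\,n!}\big[(-b,n)-(1-b,n)\big]+\frac{(a,n-1)(1-b,n-1)}{(c,n-1)\,(n-1)!}.$$
The decisive Pochhammer simplification is $(-b,n)-(1-b,n)=-n\,(1-b,n-1)$, which collapses the first term to $-(a,n)(1-b,n-1)/[(c,n)\,(n-1)!]$, so that both terms carry the common factor $(1-b,n-1)/(n-1)!$.

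Factoring this out leaves $\tfrac{(a,n-1)}{(c,n-1)}-\tfrac{(a,n)}{(c,n)}$; using $(a,n)=(a,n-1)(a+n-1)$, $(c,n)=(c,n-1)(c+n-1)$ together with the relation $c-a=1-b$, this difference equals $(1-b)(a,n-1)/[(c,n-1)(c+n-1)]$. Hence the left coefficient is $(1-b)(a,n-1)(1-b,n-1)/[(c,n)\,(n-1)!]$. On the right, shifting the summation index shows the coefficient of $x^n$ to be $(1-b)(a,n-1)(1-b,n-1)/[c\,(c+1,n-1)\,(n-1)!]$, and the two agree because $c\,(c+1,n-1)=(c,n)$. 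The only genuine difficulty is this Pochhammer bookkeeping; everything else is formal. A shorter but less self-contained alternative would deduce the identity from Gauss's contiguous relations, namely $F(a,1-b;c;x)-F(a,-b;c;x)=\tfrac{ax}{c}F(a+1,1-b;c+1;x)$ combined with $c\,F(a,1-b;c;x)=a\,F(a+1,1-b;c+1;x)+(1-b)F(a,1-b;c+1;x)$, the latter using $c=1+a-b$; but since the paper records neither relation, I would present the direct coefficient comparison.
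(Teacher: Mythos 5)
Your proof is correct and follows essentially the same route as the paper: both arguments expand everything via the series definition \eqref{hypform} and reduce the identity to Pochhammer bookkeeping (your simplification $(-b,n)-(1-b,n)=-n\,(1-b,n-1)$ plays exactly the role of the paper's factor $\xi_{a,n}=(1-b)(n+1)$ after its index shift). The only difference is organizational — you compare coefficients of $x^n$ on both sides, while the paper transforms the left-hand series directly into the right-hand one — which is not a substantive departure.
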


\begin{proof}
By using the formula \eqref{hypform}, we obtain
$$
\begin{array}{l}
 H_{a,b} (r) =\displaystyle\frac{\pi _{1/b,1/a} }{2r^{1/b} }\left[ {\sum\limits_{n = 0}^\infty  {\frac{{(a)_n ( - b)_n }}{{(1+a-b)_n{n!}  }}r^{n/b} }  - \left( {1 - r^{1/b} } \right)\sum\limits_{n = 0}^\infty  {\frac{{(a)_n (1 - b)_n }}{{(1+a-b)_n{n!} }}r^{n/b} } } \right] \\
  =\displaystyle\frac{{\pi _{1/b,1/a} }}{{2r^{1/b} }}\left[ {\sum\limits_{n = 0}^\infty  {\left( {\frac{{(a)_n ( - b)_n }}
	{{(1+a-b)_n{n!} }} - \frac{{(a)_n (1 - b)_n }}{{(1+a-b)_n{n!} }}} \right)r^{n/b} }  + \sum\limits_{n = 0}^\infty  {\frac{{(a)_n (1 - b)_n }}{{(1+a-b)_n{n!} }}r^{(n + 1)/b} } } \right] \\
  =\displaystyle\frac{{\pi _{1/b,1/a} }}{2} \sum\limits_{n = 0}^\infty  {\left( {\frac{{(a)_{n + 1} ( - b)_{n + 1} }}{{(1+a-b)_{n+1}{(n+1)!} }} - \frac{{(a)_{n + 1} (1 - b)_{n + 1} }}{{(1+a-b)_{n+1}{(n+1)!} }}} \right)r^{n/b} }     \\
	\qquad +\displaystyle\frac{{\pi _{1/b,1/a} }}{2}\sum\limits_{n = 0}^\infty  {\frac{{(a)_n (1 - b)_n }}{{(1+a-b)_{n}{n!} }}r^{n/b} }\\
  =\displaystyle\frac{{\pi _{1/b,1/a}} }{2}\left[ {\sum\limits_{n = 0}^\infty  {\frac{{(a)_n (1 - b)_n }}{{(1+a-b)_{n+1}{(n+1)!} }}\cdot \xi_{a,n} \cdot r^{n/b} } } \right] \\
  =\displaystyle\frac{{(1 - b)\pi _{1/b,1/a} }}{2}\sum\limits_{n = 0}^\infty  {\frac{{(a)_n (1 - b)_n }}{{n!(1+a-b)_{n+1} }}r^{n/b} }  \\
  =\displaystyle\frac{{(1 - b)\pi _{1/b,1/a} }}{2(1+a-b)}F\left( {a,1 - b;2+a-b;r^{1/b} } \right), \\
 \end{array}$$
where
$$\xi_{a,n} = (n + a)( -b) - (a + n)(1 - b + n)
	+ (n + 1)(n+1+a-b)=(1-b)(n+1).$$
\end{proof}


\begin{lemma}\label{lem2.4}
For $p,q>1$ and $r\in(0,1)$, we have
\begin{equation}\label{(2.7)}
H_{1/q,1/p} (0) = \frac{{\left( {1 - \frac{1}{p}} \right)\pi _{p,q} }}{2\left(1 + \frac{1}{q} - \frac{1}{p}\right)},\qquad H_{1/q,1/p} (1) = 1.
\end{equation}
\end{lemma}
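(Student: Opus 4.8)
The plan is to apply Lemma~\ref{lem2.1} with the specialization $a=1/q$ and $b=1/p$, which is legitimate because $p,q>1$ forces $a,b\in(0,1)$, and then to evaluate the resulting hypergeometric function at the two endpoints. With this choice one has $1/b=p$, $1/a=q$, so that $\pi_{1/b,1/a}=\pi_{p,q}$, and Lemma~\ref{lem2.1} becomes
$$
H_{1/q,1/p}(r)=\frac{\left(1-\frac{1}{p}\right)\pi_{p,q}}{2\left(1+\frac{1}{q}-\frac{1}{p}\right)}\,
F\left(\frac{1}{q},1-\frac{1}{p};2+\frac{1}{q}-\frac{1}{p};r^{p}\right).
$$
The two assertions of the lemma are then just the values of this expression at $r=0$ and $r=1$.

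For the value at $r=0$ I would simply use $F(a,b;c;0)=1$, which is immediate from the series \eqref{hypform} (only the $n=0$ term survives). This reduces the prefactor to itself and yields the first claimed value at once.

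For $r=1$ the main step is to invoke the Gauss summation formula
$$
F(a,b;c;1)=\frac{\Gamma(c)\,\Gamma(c-a-b)}{\Gamma(c-a)\,\Gamma(c-b)},\qquad {\rm Re}(c-a-b)>0 .
$$
Here $a=\frac{1}{q}$, $b=1-\frac{1}{p}$, $c=2+\frac{1}{q}-\frac{1}{p}$, so that $c-a-b=1>0$ and the formula is applicable for all $p,q>1$; moreover $c-a=2-\frac{1}{p}$ and $c-b=1+\frac{1}{q}$. The remaining work is to combine this value with the beta-function expression $\pi_{p,q}=\frac{2}{q}B\!\left(1-\frac{1}{p},\frac{1}{q}\right)=\frac{2}{q}\,\frac{\Gamma\!\left(1-\frac{1}{p}\right)\Gamma\!\left(\frac{1}{q}\right)}{\Gamma\!\left(1-\frac{1}{p}+\frac{1}{q}\right)}$ and with the functional equation $\Gamma(x+1)=x\Gamma(x)$, which I would apply to $\Gamma\!\left(2-\frac{1}{p}\right)$, $\Gamma\!\left(1+\frac{1}{q}\right)$ and $\Gamma\!\left(2+\frac{1}{q}-\frac{1}{p}\right)$. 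After these substitutions every gamma factor cancels in pairs, the factors $1-\frac{1}{p}$ and $1+\frac{1}{q}-\frac{1}{p}$ cancel against those produced by the functional equation, and the numerical prefactor $\frac{1}{2}\cdot\frac{2}{q}\cdot q$ collapses to $1$.

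I do not anticipate a genuine obstacle: the argument is a direct specialization of Lemma~\ref{lem2.1} followed by two standard hypergeometric facts. The only point deserving explicit attention is the convergence condition ${\rm Re}(c-a-b)>0$ required by the Gauss formula, which holds here because $c-a-b$ equals exactly $1$ independently of $p$ and $q$; the rest is careful bookkeeping of the gamma-function cancellations.
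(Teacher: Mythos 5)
Your proposal is correct and follows essentially the same route as the paper: specialize Lemma~\ref{lem2.1} at $a=1/q$, $b=1/p$, use $F(a,b;c;0)=1$ at the origin, and at $r=1$ apply the Gauss summation formula \eqref{(2.8)} together with $\Gamma(x+1)=x\Gamma(x)$ and the beta-function expression for $\pi_{p,q}$ to cancel all gamma factors. Your explicit verification that $c-a-b=1>0$ is a small point of care the paper leaves implicit, but the argument is otherwise identical.
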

\begin{proof}
By definition, it is easy to see that $$
H_{1/q,1/p} (0) = \frac{{\left( {1 - \frac{1}{p}} \right)\pi _{p,q} }}{2\left(1 + \frac{1}{q} - \frac{1}{p}\right)}F\left( {\frac{1}{q},1 - \frac{1}{p};2+\frac{1}{q} - \frac{1}{p};0} \right) =  \frac{{\left( {1 - \frac{1}{p}} \right)\pi _{p,q} }}{2\left(1 + \frac{1}{q} - \frac{1}{p}\right)}.
$$
Again, by using the following identity
\begin{equation}\label{(2.8)}
F\left( {\alpha ,\beta ;\gamma ;1} \right) = \frac{{\Gamma (\gamma )\Gamma (\gamma  - \alpha  - \beta )}}{{\Gamma \left( {\gamma  - \alpha } \right)\Gamma \left( {\gamma  - \beta } \right)}},
\end{equation}
(see, \cite[p. 153]{wg}), we get
\begin{align*}
 H_{1/q,1/p} (1) = &\frac{{\left( {1 - \frac{1}{p}} \right)\pi _{p,q} }}{2\left(1 + \frac{1}{q} - \frac{1}{p}\right)}F\left( {\frac{1}{q},1 - \frac{1}{p};2+\frac{1}{q} - \frac{1}{p};1} \right) \\
  = &\frac{{\left( {1 - \frac{1}{p}} \right)\pi _{p,q} }}{2\left(1 + \frac{1}{q} - \frac{1}{p}\right)}
  \frac{{\Gamma \left(2+ \frac{1}{q} - \frac{1}{p}\right)\Gamma (1)}}{{\Gamma \left( {2 - \frac{1}{p}} \right)\Gamma \left( {1 + \frac{1}{q}} \right)}} \\
  =&\frac{{\pi _{p,q} }}{2}\frac{{\Gamma \left( {1 + \frac{1}{q} - \frac{1}{p}} \right)}}{{\Gamma \left( {1 - \frac{1}{p}} \right)\Gamma \left( {1 + \frac{1}{q}} \right)}} \\
  =& 1.\\
 \end{align*}
\end{proof}

\begin{lemma}\label{lem2.3}
For $p,q>1$ and $r\in(0,1)$, we have
\begin{equation}\label{(2.5)}
\begin{array}{l}
 \left( {2 - \frac{1}{p}} \right)F\left( {1 + \frac{1}{q},3 - \frac{1}{p};4 + \frac{1}{q} - \frac{1}{p};1 - r^p } \right)\\
  =\left(3 + \frac{1}{q} - \frac{1}{p}\right)F\left( {1 + \frac{1}{q},2 - \frac{1}{p};3 + \frac{1}{q} - \frac{1}{p};1 - r^p } \right) \\
  - \left( {1 + \frac{1}{q}} \right)F\left( {1 + \frac{1}{q},2 - \frac{1}{p};4 + \frac{1}{q} - \frac{1}{p};1 - r^p } \right). \\
 \end{array}
\end{equation}
\end{lemma}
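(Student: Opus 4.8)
The plan is to prove the identity by comparing the power-series coefficients of the three hypergeometric functions term by term. I would write $z = 1 - r^p \in (0,1)$ and abbreviate $a = 1 + \frac1q$, $b = 2 - \frac1p$, and $c = 3 + \frac1q - \frac1p$. The observation that drives the whole computation is that these satisfy $c = a + b$ (equivalently $c - a = b$), and moreover the three numerical coefficients appearing in \eqref{(2.5)} are precisely $2 - \frac1p = b$, $\ 3 + \frac1q - \frac1p = c$, and $1 + \frac1q = a$. Hence \eqref{(2.5)} is exactly the assertion
$$b\, F(a, b+1; c+1; z) = c\, F(a, b; c; z) - a\, F(a, b; c+1; z),$$
a Gauss-type contiguous relation specialized to the case $c = a + b$.

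Next I would expand each side using \eqref{hypform}. The coefficient of $z^n$ is $\frac{(a)_n (b+1)_n}{(c+1)_n\, n!}$ on the left, and $\frac{(a)_n (b)_n}{(c)_n\, n!}$ and $\frac{(a)_n (b)_n}{(c+1)_n\, n!}$ in the two terms on the right. Since all three series converge absolutely for $|z| < 1$, it suffices to verify equality of the coefficients for each $n \ge 0$. Dividing out the common factor $\frac{(a)_n (b)_n}{n!}$ (legitimate since $a,b,c>0$ so no Pochhammer symbol vanishes) reduces the claim to the purely algebraic identity
$$b\,\frac{(b+1)_n}{(b)_n\,(c+1)_n} = \frac{c}{(c)_n} - \frac{a}{(c+1)_n}.$$

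Finally I would simplify the Pochhammer ratios, exactly as in the computation of $\xi_{a,n}$ in the proof of Lemma~\ref{lem2.1}. Using $(b+1)_n/(b)_n = (b+n)/b$, the left-hand side collapses to $(b+n)/(c+1)_n$; using $(c)_n = \frac{c}{c+n}(c+1)_n$, the right-hand side collapses to $\frac{c+n-a}{(c+1)_n}$. At this point I invoke $c - a = b$ to rewrite $c + n - a = b + n$, so both sides equal $(b+n)/(c+1)_n$ and the coefficients agree for all $n$, which proves the lemma. Every step is elementary; the only real ``obstacle'' is a bookkeeping one, namely recognizing that the specialization $c = a+b$ is precisely what forces the cancellation $c+n-a = b+n$, and this is exactly what aligns the single Pochhammer ratio on the left with the difference of two ratios on the right.
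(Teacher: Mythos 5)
Your proof is correct, but it follows a genuinely different route from the paper. The paper's proof is essentially one line: it cites the Gauss contiguous relation $(\sigma - \rho)F(\alpha,\rho;\sigma+1;z) = \sigma F(\alpha,\rho;\sigma;z) - \rho F(\alpha,\rho+1;\sigma+1;z)$ from Prudnikov--Brychkov--Marichev and specializes $\alpha = 1+\tfrac1q$, $\rho = 2-\tfrac1p$, $\sigma = 3+\tfrac1q-\tfrac1p$, $z = 1-r^p$; the lemma drops out because this choice makes $\sigma - \rho = \alpha$, which is exactly your observation that $c = a+b$. What you do instead is prove that specialized contiguous relation from scratch: you expand all three series via \eqref{hypform}, divide out the common factor $(a)_n(b)_n/n!$ (legitimate, as you note, since $a,b,c>0$), and reduce the coefficient identity to $\frac{b+n}{(c+1)_n} = \frac{c+n-a}{(c+1)_n}$, where the hypothesis $c = a+b$ enters at precisely one point. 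This mirrors the series-manipulation technique of the paper's own proof of Lemma~\ref{lem2.1}, and your Pochhammer simplifications $(b+1)_n/(b)_n = (b+n)/b$ and $(c)_n = \frac{c}{c+n}(c+1)_n$ are correct. The trade-off: the paper's approach buys brevity and rests on a relation valid for arbitrary parameters (not only $c=a+b$), while yours buys self-containedness, replacing an external citation by an elementary verification that makes visible exactly where the special parameter structure of the lemma is used.
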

\begin{proof}
Utilizing the contiguous relation (See \cite[Equation 26]{pbm})
$$
(\sigma  - \rho )F\left( {\alpha ,\rho ;\sigma  + 1;z} \right) = \sigma F\left( {\alpha ,\rho ;\sigma ;z} \right) - \rho F\left( {\alpha ,\rho  + 1;\sigma  + 1;z} \right),
$$
and letting $
\sigma  = 3 + \frac{1}{q} - \frac{1}{p},\alpha  = 1 + \frac{1}{q},\rho  = 2 - \frac{1}{p},z = 1 - r^p
$, we get the identity  ~\eqref{(2.5)}.
\end{proof}

\section{Proof of the main result}
\vspace{.5cm}
\noindent{\bf Proof of Theorem \ref{thm1.1}.}
Using formulas ~\eqref{(1.1)},  ~\eqref{(1.2)} and letting $a=1/q,\,b=1/p$ in \eqref{(2.2)}, we have
\begin{align*}
 \Delta _{p,q} (r) =& H_{1/q,1/p} (r) - H_{1/q,1/p} (r') \\
  = &\frac{{\left( {1 - \frac{1}{p}} \right)\pi _{p,q} }}{2\left(1 + \frac{1}{q} - \frac{1}{p}\right)}\left[ {F\left( {\frac{1}{q},1 - \frac{1}{p};2 + \frac{1}{q} - \frac{1}{p};r^p } \right) - F\left( {\frac{1}{q},1 - \frac{1}{p};2 + \frac{1}{q} - \frac{1}{p};1 - r^p } \right)} \right]. \\
\end{align*}
Applying the following derivative formula$$
\frac{d}{{dr}}F\left( {a,b;c;r} \right) = \frac{{ab}}{c}F\left( {a + 1,b + 1;c + 1;r} \right),
$$
we obtain
$$\Delta '_{p,q} (r) =
 \eta _{p,q} r^{p - 1} \left[ {F\left( {1 + \frac{1}{q},2 - \frac{1}{p};3 + \frac{1}{q} - \frac{1}{p};r^p } \right) + F\left( {1 + \frac{1}{q},2 - \frac{1}{p};3 + \frac{1}{q} - \frac{1}{p};\left( {r'} \right)^p } \right)} \right],$$
and
$$
\begin{array}{l}
 \frac{1}{{\eta _{p,q} r^{p - 2} }}\Delta ''_{p,q} (r) =  \\
 (p - 1)\left[ {F\left( {1 + \frac{1}{q},2 - \frac{1}{p};3 + \frac{1}{q} - \frac{1}{p};r^p } \right) + F\left( {1 + \frac{1}{q},2 - \frac{1}{p};3 + \frac{1}{q} - \frac{1}{p};\left( {r'} \right)^p } \right)} \right] \\
  + \frac{{p\left( {1 + \frac{1}{q}} \right)\left( {2 - \frac{1}{p}} \right)}}{{\left( {3 + \frac{1}{q} - \frac{1}{p}} \right)}}r^{p}\left[ {F\left( {2 + \frac{1}{q},3 - \frac{1}{p};4 + \frac{1}{q} - \frac{1}{p};r^p } \right) - F\left( {2 + \frac{1}{q},3 - \frac{1}{p};4 + \frac{1}{q} - \frac{1}{p};\left( {r'} \right)^p } \right)} \right], \\
 \end{array}
$$ where
$$\eta _{p,q}  = \frac{{\frac{p}{q}\left( {1 - \frac{1}{p}} \right)^2 \pi _{p,q} }}{{2\left( {1 + \frac{1}{q} - \frac{1}{p}} \right)\left( {2 + \frac{1}{q} - \frac{1}{p}} \right)}}.$$
By utilizing the following identity
$$(1 - z)F\left( {a + 1,b + 1;a + b + 1;z} \right) = F\left( {a,b;a + b + 1;z} \right)$$
(see \cite{ar}) and letting $a=1+\frac{1}{q},b=2-\frac{1}{p},z=1-r^p$, we get
$$
\begin{array}{l}
 \frac{1}{{\eta _{p,q} r^{p - 2} }}\Delta ''_{p,q} (r) =  \\
 (p - 1)\left[ {F\left( {1 + \frac{1}{q},2 - \frac{1}{p};3 + \frac{1}{q} - \frac{1}{p};r^p } \right) - F\left( {1 + \frac{1}{q},2 - \frac{1}{p};3 + \frac{1}{q} - \frac{1}{p};\left( {r'} \right)^p } \right)} \right] \\
  + \frac{{p\left( {1 + \frac{1}{q}} \right)\left( {2 - \frac{1}{p}} \right)}}{{\left( {3 + \frac{1}{q} - \frac{1}{p}} \right)}}r^p F\left( {2 + \frac{1}{q},3 - \frac{1}{p};4 + \frac{1}{q} - \frac{1}{p};r^p } \right) \\
  - \frac{{p\left( {1 + \frac{1}{q}} \right)\left( {2 - \frac{1}{p}} \right)}}{{\left( {3 + \frac{1}{q} - \frac{1}{p}} \right)}}F\left( {1 + \frac{1}{q},2 - \frac{1}{p};4 + \frac{1}{q} - \frac{1}{p};\left( {r'} \right)^p } \right). \\
 \end{array}
$$
Now by Lemma~\ref{lem2.3} we have
$$
\begin{array}{l}
 \frac{1}{{\eta _{p,q} r^{p - 2} }}\Delta ''_{p,q} (r) =  \\
 (p - 1)F\left( {1 + \frac{1}{q},2 - \frac{1}{p};3 + \frac{1}{q} - \frac{1}{p};r^p } \right) +(p-1)F\left( {1 + \frac{1}{q},2 - \frac{1}{p};3 + \frac{1}{q} - \frac{1}{p};\left( {r'} \right)^p } \right) \\
  + \frac{{p\left( {1 + \frac{1}{q}} \right)\left( {2 - \frac{1}{p}} \right)}}{{\left( {3 + \frac{1}{q} - \frac{1}{p}} \right)}}r^p F\left( {2 + \frac{1}{q},3 - \frac{1}{p};4 + \frac{1}{q} - \frac{1}{p};r^p } \right) \\
  - p\left( {2 - \frac{1}{p}} \right)F\left( {1 + \frac{1}{q},2 - \frac{1}{p};3 + \frac{1}{q} - \frac{1}{p};\left( {r'} \right)^p } \right) \\
  + \frac{{p\left( {2 - \frac{1}{p}} \right)^2 }}{{\left( {3 + \frac{1}{q} - \frac{1}{p}} \right)}}F\left( {1 + \frac{1}{q},3 - \frac{1}{p};4 + \frac{1}{q} - \frac{1}{p};\left( {r'} \right)^p } \right) \\
  = (p - 1)F\left( {1 + \frac{1}{q},2 - \frac{1}{p};3 + \frac{1}{q} - \frac{1}{p};r^p } \right) \\
  + \frac{{p\left( {1 + \frac{1}{q}} \right)\left( {2 - \frac{1}{p}} \right)}}{{\left( {3 + \frac{1}{q} - \frac{1}{p}} \right)}}r^p F\left( {2 + \frac{1}{q},3 - \frac{1}{p};4 + \frac{1}{q} - \frac{1}{p};r^p } \right) \\
  - \left[ {p\left( {2 - \frac{1}{p}} \right) - (p - 1)} \right]F\left( {1 + \frac{1}{q},2 - \frac{1}{p};3 + \frac{1}{q} - \frac{1}{p};\left( {r'} \right)^p } \right) \\
  + \frac{{p\left( {2 - \frac{1}{p}} \right)^2 }}{{\left( {3 + \frac{1}{q} - \frac{1}{p}} \right)}}F\left( {1 + \frac{1}{q},3 - \frac{1}{p};4 + \frac{1}{q} - \frac{1}{p};\left( {r'} \right)^p } \right). \\
 \end{array}
$$
Considering the condition (1) and (2) we conclude that
$$
\begin{array}{l}
 \frac{1}{{\eta _{p,q} r^{p - 2} }}\Delta ''_{p,q} (r) \\
  \ge (p - 1) - pF\left( {1 + \frac{1}{q},2 - \frac{1}{p};3 + \frac{1}{q} - \frac{1}{p};\left( {r'} \right)^p } \right) \\
  + \frac{{p\left( {2 - \frac{1}{p}} \right)^2 }}{{\left( {3 + \frac{1}{q} - \frac{1}{p}} \right)}}F\left( {1 + \frac{1}{q},3 - \frac{1}{p};4 + \frac{1}{q} - \frac{1}{p};\left( {r'} \right)^p } \right) \\
	
  = (p - 1)\left[ {1 + \sum\limits_{n = 0}^\infty  {\frac{{\left( {1 + \frac{1}{q}} \right)_n \left( {2 - \frac{1}{p}} \right)_n}}
	{{\left( {3 + \frac{1}{q} - \frac{1}{p}} \right)_{n+1}}}\left( {n - \frac{{\frac{1}{q} + \frac{3}{p} - 1 - \frac{1}{{p^2 }}}}{{1 - \frac{1}{p}}}} \right)\frac{{\left( {r'} \right)^{pn} }}{{n!}}} } \right] \\

  > (p - 1)\left[ {1 + \sum\limits_{n = 0}^1 {\frac{{\left( {1 + \frac{1}{q}} \right)_n \left( {2 - \frac{1}{p}} \right)_n}}{{\left( {3 + \frac{1}{q} - \frac{1}{p}} \right)_{n+1}}}\left( {n - \frac{{\frac{1}{q} + \frac{3}{p} - 1 - \frac{1}{{p^2 }}}}{{1 - \frac{1}{p}}}} \right)\frac{{\left( {r'} \right)^{pn} }}{{n!}}} } \right] \\
	
  = (p - 1)\left[ {\frac{{\varepsilon (p,q)}}{{\left( {1 - \frac{1}{p}} \right)\left( {3 + \frac{1}{q} - \frac{1}{p}} \right)\left( {4 + \frac{1}{q} - \frac{1}{p}} \right)}} + \frac{{\left( {1 + \frac{1}{q}} \right)\left( {2 - \frac{1}{p}} \right)\left( {\frac{4}{p} + \frac{1}{q} - 2 - \frac{1}{{p^2 }}} \right)r^p }}{{\left( {1 - \frac{1}{p}} \right)\left( {3 + \frac{1}{q} - \frac{1}{p}} \right)\left( {4 + \frac{1}{q} - \frac{1}{p}} \right)}}} \right] \\
  > 0.
 \end{array}
$$
Now we conclude that $\Delta '_{p,q} (r) $ is strictly increasing on $(0,1)$, because $\Delta ''_{p,q} (r)>0 $. Hence, $\Delta '_{p,q} (r)> \Delta '_{p,q}(0).$
By L'H\^{o}spital rule, we get
\begin{align*}
 \Delta _{p,q} ^\prime  (0) =& \mathop {\lim }\limits_{r \to 0^ +  } \frac{{\Delta _{p,q} (r) - \Delta _{p,q} (0)}}{{r - 0}} \\
  =& \mathop {\lim }\limits_{r \to 0^ +  }\left[ \frac{{H_{1/q,1/p} (r) - H_{1/q,1/p} (0)}}{{r }} - \frac{{H_{1/q,1/p} (r') - H_{1/q,1/p} (1)}}{{r }}\right] \\
  =& H'_{1/q,1/p} (0) - \mathop {\lim }\limits_{x \to 1^ +  } \frac{{H_{1/q,1/p} (x) - 1}}{{\left( {1 - x^p } \right)^{1/p} }} \\
  = &\mathop {\lim }\limits_{x \to 1^ +  } H'_{1/q,1/p} (x)\frac{{\left( {1 - x^p } \right)^{1 - 1/p} }}{{x^{p - 1} }} \\
  = &0,
\end{align*}
where we apply $$
\begin{array}{l}
 H'_{1/q,1/p} (0)  \\
 =\displaystyle\frac{{\frac{p}{q}\left( {1 - \frac{1}{p}} \right)^2 r^{p - 1}\pi_{p,q} }}{{2\left( {1 + \frac{1}{q} - \frac{1}{p}} \right)\left( {2 + \frac{1}{q} - \frac{1}{p}} \right)}}F\left( {1 + \frac{1}{q},2 - \frac{1}{p};3 + \frac{1}{q} - \frac{1}{p};r^p } \right)\left| {r = 0}. \right. \\
 \end{array}$$
This implies that $\Delta_{p,q} (r) $ is strictly increasing on $(0,1)$.
Define $$
M_{p,q} (r) = \frac{{\Delta _{p,q} (r) - \Delta _{p,q} (0)}}{{r - 0}}.
$$
Since $\Delta_{p,q} (r) $ is strictly convex on $(0,1)$, it follows that $M_{p,q} (r)$ is strictly increasing on $(0,1)$. This leads to
\begin{equation}\label{(2.9)}
M_{p,q} (0)<M_{p,q} (r)<M_{p,q} (1).
\end{equation}
Using Lemma~\ref{lem2.4}, we have $M_{p,q} (0)=0$ and
$$M_{p,q} (1) = \Delta _{p,q} (1) - \Delta _{p,q} (0) = 2H_{1/q,1/p} (1) - 2H_{1/q,1/p} (0) =
{2 - \frac{{\left( {1 - \frac{1}{p}} \right)\pi _{p,q} }}{{\left( {1 + \frac{1}{q} - \frac{1}{p}} \right)}}}.
$$
So, the formula ~\eqref{(2.9)} implies the double inequalities in ~\eqref{(1.4)}.
This completes the proof.$\hfill\square$

\vspace{.4cm}
\noindent{\bf Proof of Theorem \ref{thm1.2}.}
Let $$
\lambda _{p,q} (r,s) = \Delta _{p,q} (rs) - \Delta _{p,q} (r) - \Delta _{p,q} (s).
$$
Simple computation yields $$
\frac{\partial }{{\partial r}}\lambda _{p,q} (r,s) = s\Delta '_{p,q} (rs) - \Delta _{p,q} ^\prime  (r)
$$ and $$
\frac{{\partial ^2 }}{{\partial r\partial s}}\lambda _{p,q} (r,s) = \Delta '_{p,q} (rs) + rs\Delta _{p,q} ^{\prime \prime } (rs).
$$
Considering the results of Theorem \ref{thm1.1}, we have $\frac{{\partial ^2 }}{{\partial r\partial s}}\lambda _{p,q} (r,s)>0$. So, the function
$\frac{\partial }{{\partial r}}\lambda _{p,q} (r,s)$ is strictly increasing with respect to $s$. Thus, $$
\frac{\partial }{{\partial r}}\lambda _{p,q} (r,s) < \frac{\partial }{{\partial r}}\lambda _{p,q} (r,s)\left| {_{s = 1} } \right. = 0.
$$
It follows that $
r \mapsto \lambda _{p,q} (r,s)
$ is strictly decreasing which lead to $$
 - \Delta _{p,q} (1) = \lambda _{p,q} (1,s) < \lambda _{p,q} (r,s) < \lambda _{p,q} (0,s) =-\Delta _{p,q} (s)<  - \Delta _{p,q} (0).
$$ where we apply
$$
\Delta _{p,q} (0) = H_{1/q,1/p} (0) - H_{1/q,1/p} (1) = \frac{{\left( {1 - \frac{1}{p}} \right)\pi _{p,q} }}{{2\left( {1 + \frac{1}{q} - \frac{1}{p}} \right)}} - 1$$
and
$$
\Delta _{p,q} (1) = H_{1/q,1/p} (1) - H_{1/q,1/p} (0) = 1 - \frac{{\left( {1 - \frac{1}{p}} \right)\pi _{p,q} }}{{2\left( {1 + \frac{1}{q} - \frac{1}{p}} \right)}}.$$
This completes the proof.$\hfill\square$

\begin{remark}\rm
When $p=q=2$, then it is easy to observe that Theorem \ref{thm1.2} coincides with Proposition \ref{alri}.
\end{remark}

\end{document}